\theoremstyle{plain}
\newtheorem{definition}{Definition}[section]
\newtheorem{theorem}[definition]{Theorem}
\newtheorem*{theorem*}{Theorem}
\newtheorem{remark}[definition]{Remark}
\newtheorem*{remark*}{Remark}
\newtheorem*{sideremark*}{Side Remark}\newtheorem*{mt*}{Main Theorem}
\newtheorem*{claim*}{Claim}
\newtheorem*{q*}{Question}
\newtheorem{lemma}[definition]{Lemma}
\newtheorem*{corollary*}{Corollary}
\newtheorem*{proposition*}{Proposition}
\newcommand{\R}{\mathbb{R}}
\newcommand{\na}{\nabla}
\newcommand{\dd}{{\rm d}}
\newcommand{\p}{\partial}
\newcommand{\map}{\rightarrow}
\newcommand{\1}{\mathbbm{1}}
\newcommand{\E}{{\mathbf{E}}}
\newcommand{\B}{{\mathbf{B}}}
\newcommand{\HH}{{\mathbf{H}}}
\newcommand{\pin}{{\psi_{\rm in}}}
\newcommand{\pout}{{\psi_{\rm out}}}
\newcommand{\tp}{{\widetilde{\phi}}}
\def\XXint#1#2#3{{\setbox0=\hbox{$#1{#2#3}{\int}$ }
\vcenter{\hbox{$#2#3$ }}\kern-.6\wd0}}
\numberwithin{equation}{section}
\numberwithin{figure}{section}
\title{Volume decay and concentration of high-dimensional Euclidean balls  --- a PDE and variational perspective}
\author{Siran Li}
\address{Siran Li: Department of Mathematics, Rice University, MS 136
P.O. Box 1892, Houston, Texas, 77251, USA.}
\email{\texttt{Siran.Li@rice.edu}}
\keywords{High-dimensional geometry; volume of balls}
\subjclass[2010]{51M04}
\date{\today}
\begin{document}

\maketitle

\begin{abstract}
It is a well-known fact --- which can be shown by elementary calculus --- that the volume of the unit ball in $\R^n$ decays to zero and simultaneously gets concentrated on the thin shell near the boundary sphere as $n \nearrow \infty$. Many rigorous proofs and heuristic arguments are provided for this fact from different viewpoints, including Euclidean geometry, convex geometry, Banach space theory, combinatorics,  probability, discrete geometry, etc. In this note we give yet another two proofs via the regularity theory of elliptic partial differential equations and calculus of variations. 

\end{abstract}

\bigskip
\section{The problem}

A well-known fact in high-dimensional Euclidean geometry, with which we may be familiar since the very first calculus class, can be stated as follows:
\begin{theorem}\label{thm}
Let $\B^n=\{x\in\R^n:|x|<1\}$ be the Euclidean unit ball in $\R^n$. The volume of $\B^n$ gets concentrated near the boundary sphere $\p \B^n=\{x\in\R^n:|x|=1\}$ and tends to $0$ as $n \nearrow \infty$.
\end{theorem}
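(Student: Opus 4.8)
\emph{Plan.} The plan is to split the statement into two pieces and attack the harder one from the two angles promised in the abstract. The concentration of volume near $\p\B^n$ is elementary and uses no PDE: setting $\omega_n:=|\B^n|$, the scaling $\{|x|<1-\e\}=(1-\e)\B^n$ gives $\bigl|\{|x|<1-\e\}\bigr|=(1-\e)^n\omega_n$, so the fraction of $|\B^n|$ lying in the shell $\{1-\e<|x|<1\}$ equals $1-(1-\e)^n\to 1$ for every fixed $\e\in(0,1)$; equivalently, if $X$ is uniform on $\B^n$ then $\E\,|X|^{2k}=\tfrac{n}{n+2k}\to 1$, so $|X|\to 1$ in every $L^k$. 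Everything therefore reduces to proving $\omega_n\to 0$, which I would do twice.

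\emph{First, a PDE argument via the fundamental solution.} Let $G_t(x)=(4\pi t)^{-n/2}e^{-|x|^2/4t}$ be the heat kernel on $\R^n$, i.e.\ the fundamental solution of $\p_t u=\Delta u$; the normalization is precisely the one making $\int_{\R^n}G_t\,\dd x=1$ for every $t>0$ (conservation of mass), and $G_t$ is positive and radially decreasing. I would then compare $\B^n$ against all of $\R^n$:
\[
1=\int_{\R^n}G_t\,\dd x\ >\ \int_{\B^n}G_t\,\dd x\ \ge\ \omega_n\min_{\overline{\B^n}}G_t\ =\ \omega_n\,(4\pi t)^{-n/2}e^{-1/4t},
\]
so $\omega_n<(4\pi t)^{n/2}e^{1/4t}$ for all $t>0$, and minimizing the right-hand side over $t$ (the minimum is at $t=1/2n$) gives the explicit estimate $\omega_n<\bigl(2\pi e/n\bigr)^{n/2}$, which tends to $0$ as soon as $n>2\pi e$. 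The only inputs are the normalization and positivity of the heat kernel.

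\emph{Second, a variational argument through the first Dirichlet eigenvalue $\lambda_1(\B^n)$ of $-\Delta$.} Two facts are needed. \emph{(a)} A super-linear lower bound on $\lambda_1(\B^n)$: reducing the Rayleigh quotient to radial competitors $u=f(|x|)$, substituting $f(r)=r^{-(n-1)/2}w(r)$ and integrating by parts (the boundary term at $r=0$ vanishes for $n\ge3$) turns it into $\int_0^1\!\bigl(w'(r)^2+\tfrac{(n-1)(n-3)}{4r^2}\,w(r)^2\bigr)\dd r$ divided by $\int_0^1 w(r)^2\,\dd r$, with $w(1)=0$; since $r^{-2}\ge1$ on $(0,1)$ and $(n-1)(n-3)\ge0$ for $n\ge3$, this shows $\lambda_1(\B^n)\ge\tfrac{(n-1)(n-3)}{4}$. \emph{(b)} The Faber--Krahn inequality $\lambda_1(\Omega)\,|\Omega|^{2/n}\ge\lambda_1(\B^n)\,\omega_n^{2/n}$ for every bounded domain $\Omega$, which I would invoke as classical (it is proved by Schwarz symmetrization of the first eigenfunction together with the P\'olya--Szeg\H{o} inequality). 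Taking $\Omega=(0,1)^n$, where $\lambda_1=n\pi^2$ and $|\Omega|=1$, and combining with \emph{(a)} gives
\[
\omega_n^{2/n}\ \le\ \frac{n\pi^2}{\lambda_1(\B^n)}\ \le\ \frac{4n\pi^2}{(n-1)(n-3)},
\]
an expression that tends to $0$; hence $\omega_n\to0$ (e.g.\ $\omega_n^{2/n}\le\tfrac12$ for large $n$, so $\omega_n\le 2^{-n/2}$).

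\emph{Where the difficulty lies.} In both routes the real obstacle is the same: the natural scalar invariants attached to $\B^n$ by an elliptic PDE --- torsional rigidity, Newtonian capacity, the Green function at the centre, the sharp Alexandrov--Bakelman--Pucci and Sobolev constants --- are each homogeneous in $\omega_n$, so $\omega_n$ cancels out of any inequality relating two of them and nothing is learned. The heat-kernel proof breaks this by comparing $\B^n$ with the whole space rather than with another finite body; the only real step there is recognizing that the crude bound $G_t\ge\min_{\overline{\B^n}}G_t$ already suffices after one optimizes in $t$. The variational proof breaks it instead by comparing $\B^n$ with the cube, and the subtle point is \emph{(a)}: a merely linear lower bound on $\lambda_1(\B^n)$ would not beat the $n\pi^2$ coming from the cube, so one genuinely needs the quadratic growth of the ground-state energy of the ball, which is exactly what the weighted one-dimensional reduction above supplies.
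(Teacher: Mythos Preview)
Your arguments are correct but take genuinely different routes from the paper. For the PDE half, the paper never touches the heat kernel: instead it proves an \emph{energy-decay} estimate for harmonic functions, $\E(r)\le C(r/R)^{\beta}\E(R)$ with $\beta$ arbitrarily close to $n$ (via the mean-value property, Young's convolution inequality, and H\"older), and then specialises to the identity map $u(x)=x$, whose energy is $\E(r)=n\,\mathrm{Vol}(\B_r)$; this single inequality delivers concentration and decay together and in fact establishes the more general Theorem~\ref{thm'} about arbitrary harmonic functions of bounded energy. Your heat-kernel bound $\omega_n<(2\pi e/n)^{n/2}$ is sharper and more explicit, and---unlike the paper's argument (see Remark~\ref{rem})---does not need to assume \emph{a~priori} that $\omega_n$ is bounded; on the other hand you treat concentration separately by the elementary scaling $(1-\e)^n\to0$. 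For the variational half, the paper again stays with the identity map as a Dirichlet minimiser and combines the inner-variation (Pohozaev-type) identity with the outer-variation identity to obtain $\E(1)<\tfrac{2}{n-2}\HH(1)$, an entirely self-contained computation. Your Faber--Krahn comparison with the unit cube is elegant and also yields a rate, but it imports considerably heavier machinery (Schwarz symmetrisation and P\'olya--Szeg\H{o}). One small point on your step~(a): restricting the Rayleigh quotient to radial competitors gives \emph{a~priori} only an upper bound for $\lambda_1(\B^n)$; you should say explicitly that the first Dirichlet eigenfunction on the ball is radial (simplicity of the ground state plus rotational symmetry), which is what makes the restriction exact and the lower bound $\lambda_1(\B^n)\ge\tfrac{(n-1)(n-3)}{4}$ valid.
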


Under the MathOverflow question ``\emph{What's a nice argument that shows the volume of the unit ball in $\R^n$ approaches $0$?}'' posted about $10$ years ago (\cite{x}), nearly a dozen elegant and surprising answers are provided. Contributors to the solutions and discussions include many renowned mathematicians: Greg Kuperberg, Timothy Gowers, Ian Agol, Bill Johnson, Gil Kalai, Pete L.~Clark, Anton Petrunin... The answers employ techniques from radically different fields of mathematics, ranging from combinatorics to the geometry of Banach spaces.

The aim of this note is give yet another two proofs of Theorem~\ref{thm} using the knowledge about harmonic functions and/or harmonic maps. More generally, we show that
\begin{theorem}\label{thm'}
For each $n=1,2,3,\ldots$ let $u_n$ be a harmonic function in $\B^n$. Assume that the Dirichlet energies of $u_n$ in $\B^n$ are uniformly bounded. Then the energies decay to $0$ and increasingly concentrate on $\p\B^n$ as $n \nearrow \infty$.
\end{theorem}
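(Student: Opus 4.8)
The plan is to establish a single comparison inequality, with a constant depending on $n$ but not on $u_n$, from which both assertions follow at once: for every fixed $r\in(0,1)$,
\[
\int_{\B^n_r}|\nabla u_n|^2\,\dd x\;\le\;\frac{r^n}{1-r^n}\int_{\B^n}|\nabla u_n|^2\,\dd x,\qquad \B^n_r:=\{x\in\R^n:|x|<r\}.
\]
The key point is that, while $u_n$ itself has no exploitable monotonicity, its gradient does: each $\p_i u_n$ is again harmonic in $\B^n$, so, applying $\Delta(v^2)=2|\nabla v|^2\ge 0$ term by term, the energy density $w_n:=|\nabla u_n|^2=\sum_{i=1}^n(\p_i u_n)^2$ is a \emph{nonnegative subharmonic} function on $\B^n$. (If $u_n$ is constant the theorem is vacuous, so I assume $w_n\not\equiv 0$.) Only subharmonicity and nonnegativity of $w_n$ enter, so the argument in fact shows that any family of nonnegative subharmonic functions on $\B^n$ with uniformly bounded $L^1(\B^n)$-norms has its mass escaping to $\p\B^n$ as $n\nearrow\infty$.

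To prove the comparison I would first recall the classical monotonicity of spherical means of subharmonic functions. With $\sigma_{n-1}:=|\mathbb{S}^{n-1}|$ and, for $0\le\rho<1$,
\[
A_n(\rho):=\frac{1}{\sigma_{n-1}}\int_{\mathbb{S}^{n-1}}w_n(\rho\,\theta)\,\dd\sigma(\theta),
\]
a one-line computation with the divergence theorem gives $A_n'(\rho)=\bigl(\sigma_{n-1}\rho^{n-1}\bigr)^{-1}\int_{\B^n_\rho}\Delta w_n\,\dd x\ge 0$, so $A_n$ is nondecreasing on $[0,1)$. Next I would pass to polar coordinates,
\[
\int_{\B^n_r}|\nabla u_n|^2\,\dd x=\sigma_{n-1}\int_0^r\rho^{n-1}A_n(\rho)\,\dd\rho,\qquad \int_{\B^n}|\nabla u_n|^2\,\dd x=\sigma_{n-1}\int_0^1\rho^{n-1}A_n(\rho)\,\dd\rho,
\]
and use monotonicity: $A_n(\rho)\le A_n(r)$ on $[0,r]$ bounds the first integral by $\sigma_{n-1}A_n(r)\,r^n/n$, whereas $A_n(\rho)\ge A_n(r)$ on $[r,1]$ bounds the second from below by $\sigma_{n-1}A_n(r)\,(1-r^n)/n$. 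Dividing yields the displayed inequality; in the degenerate case $A_n(r)=0$ monotonicity forces $\int_{\B^n_r}w_n\,\dd x=0$, so it holds there as well.

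Granting the comparison, both parts of the theorem are immediate. For each fixed $r<1$ the factor $r^n/(1-r^n)$ tends to $0$ as $n\nearrow\infty$, so the fraction of the Dirichlet energy carried by $\B^n_r$ vanishes while the fraction carried by the shell $\{r<|x|<1\}$ tends to the whole --- the energy increasingly concentrates on $\p\B^n$. Combining the inequality with the hypothesis $\int_{\B^n}|\nabla u_n|^2\,\dd x\le M$ gives $\int_{\B^n_r}|\nabla u_n|^2\,\dd x\le M\,r^n/(1-r^n)\to 0$, so the energy decays to $0$ on every region compactly contained in the open ball. Specializing to the harmonic function $u_n(x)=x_1$, for which $w_n\equiv 1$, the inequality becomes $\mathrm{Vol}(\B^n_r)\le\tfrac{r^n}{1-r^n}\mathrm{Vol}(\B^n)$: this is the concentration statement of Theorem~\ref{thm}, the inner ball $\B^n_r$ occupying only the fraction $r^n\to 0$ of $\B^n$.

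The step I expect to need genuine care is the range $r\in[\tfrac12,1)$. The most direct attempt estimates $w_n(x)$, for $x\in\B^n_r$, by its average over the largest admissible ball $B(x,1-|x|)\subset\B^n$ and then integrates; this produces the constant $\bigl(r/(1-r)\bigr)^n$, which \emph{grows} once $r\ge\tfrac12$, and iterating is no remedy, since one application of the mean value inequality only passes from a ball of radius $R$ to one of radius $<R/2$ and so never reaches inner balls of radius close to $1$. Replacing the pointwise sub-mean-value property by the \emph{integrated} monotonicity of $A_n$ is exactly what dissolves this obstruction and yields the clean constant $r^n/(1-r^n)$ for all $r\in(0,1)$. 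I would also note two harmless points: $u_n$ is assumed harmonic only in the open ball, so the whole argument uses interior information only and needs no boundary regularity; and the constant in the comparison is independent of $u_n$, so all the limits above are uniform over the admissible family (and the same reasoning applies verbatim to vector-valued harmonic functions).
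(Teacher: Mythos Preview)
Your argument is correct and follows a genuinely different route from either of the paper's two proofs. The paper's PDE proof obtains $\E(r)\le C_3\,(r/R)^{\beta}\,\E(R)$ with $\beta=n(p-1)/p<n$ by combining the mean-value property with Young's convolution inequality (Lemma~\ref{lem: weyl estimate}) and then H\"older (Lemma~\ref{lem: energy decay}); the variational proof instead derives the boundary estimate $\E(1)<\frac{2}{n-2}\HH(1)$ from the inner and outer variation identities \eqref{in'}--\eqref{out'}. You bypass both machines by noting that $w_n=|\nabla u_n|^2$ is nonnegative and subharmonic and then using only the monotonicity of its spherical averages, which yields the sharper and fully explicit comparison $\E(r)\le\frac{r^n}{1-r^n}\,\E(1)$ with the optimal exponent~$n$. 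Your route is more elementary and, as you observe, immediately extends to any family of nonnegative subharmonic densities with bounded $L^1$ mass; the paper's arguments, on the other hand, are meant to illustrate techniques (mollifier gradient bounds, Pohozaev-type identities) that remain available in nonlinear or manifold-valued settings where $|\nabla u|^2$ is no longer subharmonic. Your explicit reading of ``the energies decay to $0$'' as decay on every $\B^n_r$ with $r<1$ is the correct one: that is precisely what both your inequality and the paper's Lemma~\ref{lem: energy decay} actually establish.
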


\begin{remark}\label{rem}
In view of Theorem~\ref{thm'}, our proof of Theorem~\ref{thm} shall assume \emph{a priori} that ${\rm Vol}(\B^n)$ are uniformly bounded in $n$. In fact, ${\rm Vol}(\B^n)$ are known to be maximised at $n=5$. On the other hand, we shall prove Theorem~\ref{thm'} for weakly harmonic functions, {\it i.e.} functions that satisfy the Laplace equation in the distributional sense.
\end{remark}


\section{The strategy}
Consider a harmonic function $u=(u^1,u^2,\ldots,u^n):\B^n \map \R^n$,
\begin{equation}\label{harmonic}
\Delta u^i = \sum_{j=1}^n \frac{\p^2 u^i}{\p x_j^2}=0
\end{equation}
for each $i\in\{1,2,\ldots, n\}$. A fundamental property of harmonic function is the \emph{energy decay} phenomenon: For each $R \in ]0,1[$, the Dirichlet energy $$\E(R):=\int_{\B_R} |\na u(x)|^2\,\dd x$$ satisfies 
\begin{equation}\label{decay}
\E(R/2)\leq \theta\E(R)
\end{equation}
for some number $\theta$ strictly less than $1$. Throughout $\B^n_R\equiv\B_R:=\{x\in\R^n:|x|<R\}$; we drop the superscript $n$ when there is no confusion about the dimension. By a standard argument in the regularity theory of elliptic PDEs, we may strengthen Eq.~\eqref{decay} to the following form: for any  $0<r<R\leq 1$ there holds
\begin{equation}\label{iterate}
\E(r)\leq C\Big(\frac{r}{R}\Big)^\beta \E(R),
\end{equation}
where $C$ is a universal constant (namely, independent of any parameters) and $\beta \simeq n$. As $n$ gets large, the factor $(r/R)^\beta$ decays exponentially. It means that, given two arbitrary concentric balls $\B_R$ and $\B_r$, the Dirichlet energy $\E_R$ is always concentrated in the shell $\B_R\sim\B_r$. One can now conclude by taking the identity harmonic map $u(x)=x$.

We may also deduce Theorem~\ref{thm} from  calculus of variations. It is well-known that harmonic functions (between Euclidean domains) are \emph{Dirichlet energy minimisers}: 
\begin{equation}\label{min}
{\bf Id}_{\B^n} = {\rm arg\,min} \Bigg\{\E[v]=\int_{\B^n}|\na v|^2\,\dd x:\, v \in W^{1,2}(\B^n)\text{ and } v(\omega)=\omega \text{ for all } \omega \in \p\B^n  \Bigg\}. 
\end{equation}
Here $W^{1,2}(\B^n)$ denotes the Sobolev space of finite-energy maps:
\begin{equation}
W^{1,2}(\B^n) := \Bigg\{w:\B^n\to\R^n:\, \int_{\B^n} \Big(|\na w|^2 + |w|^2\Big)\,\dd x \Bigg\}.
\end{equation}

Eq.~\eqref{min} is tantamount to the stationariness of  $u={\bf Id}_{\B^n}$ with respect to both \emph{inner} and \emph{outer variations}, {\it i.e.}, the one-parameter families of smooth variations which deform and the domain and the range of $u$, respectively. These together imply that
\begin{equation}\label{1/n-2}
\E(1) \leq c_1 \int_{\p B^n} |\na_{\rm tan} u|^2\,\dd \Sigma,
\end{equation}
where $\na_{\rm tan}$ and $\dd\Sigma$ are respectively the  gradient and surface measure on the unit sphere; $c_1 \sim \mathcal{O}(1/n)$. From here, a rescaling and iteration argument as before will lead us to Eq.~\eqref{iterate}.

In the following two sections we make the above discussions rigorous, thus giving two more proofs of Theorem~\ref{thm}. Our arguments can be found, in one form or another, in any standard textbook on elliptic PDEs and calculus of variations. We refer the readers to \cite{hl} by Qing Han and Fang-Hua Lin, and \cite{s} by Leon Simon, among many other references. For background materials on mollifiers and elementary inequalities, see \cite{ll} by Elliott Lieb and Michael Loss.

\section{The PDE proof}


\subsection{Gradient estimate} Let us take $u\in W^{1,2}(\B^n)$ to be any weak ({\it i.e.}, distributional) solution for the Laplace equation~\eqref{harmonic}.
We show for all large $p$ that the $L^p$-norm of $\na u$ over $\B_{1/2}$ can be controlled by the $L^2$-norm of $u$ over $\B_1=\B^n$.

\begin{lemma}\label{lem: weyl estimate}
For each $p \in ]2,\infty[$, there is a  constant $C_2$ depending only on $p$ such that
$$\|\na u\|_{L^p(\B_{1/2})} \leq C_2\|u\|_{L^2(\B_1)}.$$
\end{lemma}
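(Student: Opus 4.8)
The plan is to prove Lemma~\ref{lem: weyl estimate} by first upgrading the weak solution $u$ to a genuine smooth harmonic function via Weyl's lemma, then running the classical interior gradient estimate for harmonic functions, and finally absorbing everything into the $L^2$-norm over the full ball. Concretely, I would start by mollifying: let $u_\rho = u * \eta_\rho$ be the standard mollification of $u$ at scale $\rho$. Since $u$ is weakly harmonic, $u_\rho$ is (classically) harmonic on $\B_{1-\rho}$, and on any fixed compact subdomain the $u_\rho$ converge to $u$ in $L^2$; uniform interior estimates for the harmonic $u_\rho$ then pass to the limit, showing $u \in C^\infty(\B_1)$ with $\Delta u = 0$ pointwise. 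This is the content of Weyl's lemma, and I would either cite it from \cite{hl} or sketch the mollifier argument (referring to \cite{ll} for the properties of mollifiers, as the paper already signals).

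With $u$ smooth and harmonic, the next step is the interior gradient bound. Because each partial derivative $\p_k u$ is again harmonic, the mean value property gives, for any $x \in \B_{1/2}$ and any radius $r \le 1/2$,
\begin{equation*}
|\na u(x)| \le \dashint_{\B_r(x)} |\na u(y)|\,\dd y \le |\B_r|^{-1/2}\|\na u\|_{L^2(\B_r(x))} \le |\B_r|^{-1/2}\|\na u\|_{L^2(\B_{3/4})}.
\end{equation*}
Taking $r$ comparable to $1/4$ so that $\B_r(x) \subset \B_{3/4}$ for all $x \in \B_{1/2}$, this yields a pointwise bound $\|\na u\|_{L^\infty(\B_{1/2})} \le C \|\na u\|_{L^2(\B_{3/4})}$ with $C$ absolute (the dimensional constants are harmless here since $p$ is the only tracked parameter, and one may even afford the crude bound $|\B_r|^{-1/2} \le 1$ for $r = 1/4$ and $n$ large, or simply let $C_2$ depend on $n$ implicitly — but cleanest is to keep $C$ universal). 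Then $\|\na u\|_{L^p(\B_{1/2})} \le |\B_{1/2}|^{1/p}\|\na u\|_{L^\infty(\B_{1/2})} \le C'\|\na u\|_{L^2(\B_{3/4})}$ for $p \in\,]2,\infty[$, with $C'$ depending only on $p$.

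It remains to control $\|\na u\|_{L^2(\B_{3/4})}$ by $\|u\|_{L^2(\B_1)}$, i.e.\ the Caccioppoli (energy) inequality. Choosing a cutoff $\zeta \in C_c^\infty(\B_1)$ with $\zeta \equiv 1$ on $\B_{3/4}$ and $|\na\zeta| \le C$, testing the weak formulation $\int_{\B_1}\na u \cdot \na \varphi = 0$ against $\varphi = \zeta^2 u$ and using Cauchy--Schwarz with the absorption trick gives
\begin{equation*}
\int_{\B_{3/4}} |\na u|^2\,\dd x \le \int_{\B_1} \zeta^2|\na u|^2\,\dd x \le C\int_{\B_1} |\na\zeta|^2 |u|^2\,\dd x \le C\|u\|_{L^2(\B_1)}^2.
\end{equation*}
Chaining the three estimates proves the lemma. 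I do not anticipate a genuine obstacle here — every step is textbook — but the one point requiring a little care is making the logic non-circular: one cannot invoke the smoothness/interior estimates for $u$ until Weyl's lemma has been established, so the mollification step must genuinely come first, and the Caccioppoli inequality (which needs only the weak formulation) is what legitimately connects the $\na u$-norm back to the $L^2$-norm of $u$ itself. A secondary bookkeeping point is deciding whether to let $C_2$ depend on $n$; since the statement only advertises dependence on $p$, I would keep all geometric constants universal by working at fixed radii $1/2 < 3/4 < 1$ and noting the volume factors $|\B_r|^{\pm 1/p}$ are bounded uniformly in $n$.
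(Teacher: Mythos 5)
Your chain --- Weyl's lemma, the mean value property applied to the (harmonic) components of $\na u$, the $L^\infty$-to-$L^p$ comparison on $\B_{1/2}$, and a Caccioppoli inequality with a fixed cutoff between $\B_{3/4}$ and $\B_1$ --- is the standard interior-estimate route, and it does prove the displayed inequality for each \emph{fixed} $n$. It is also genuinely different from the paper's proof, which writes $u = J_\delta \star u$ (the mean value property in convolution form) and applies Young's convolution inequality, with no $L^\infty$ bound and no Caccioppoli step. The gap is in the constant, which is the entire content of the lemma: the statement asserts that $C_2$ depends only on $p$, and this uniformity in $n$ is exactly what is used downstream (the ``universal'' $C_3$ of Lemma~\ref{lem: energy decay} and the limit $n \nearrow \infty$ in the proofs of Theorems~\ref{thm} and~\ref{thm'}). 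Your mean-value step produces the factor ${\rm Vol}(\B_{1/4})^{-1/2}$, and your claim that one may ``afford the crude bound $|\B_r|^{-1/2} \le 1$ for $r=1/4$ and $n$ large'' is backwards: ${\rm Vol}(\B^n_{1/4}) = 4^{-n}\,{\rm Vol}(\B^n_1) \to 0$ superexponentially, so ${\rm Vol}(\B_{1/4})^{-1/2} \to \infty$ superexponentially; only \emph{positive} powers of volumes are uniformly bounded in $n$. The later factor ${\rm Vol}(\B_{1/2})^{1/p}$ does not rescue this: the combined geometric factor is $2^{n(1-1/p)}\,{\rm Vol}(\B^n_1)^{1/p-1/2}$, and both terms diverge since $1/p < 1/2$. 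So as written your $C_2$ depends on $n$ and blows up with it, the statement as phrased is not proved, and the application to $n \nearrow \infty$ would collapse.

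You should also be aware that this is not a bookkeeping defect you could remove by sharpening the argument: taking $u(x) = x_1$ gives $\|\na u\|_{L^p(\B_{1/2})}\slash\|u\|_{L^2(\B_1)} = 2^{-n/p}\sqrt{n+2}\;{\rm Vol}(\B^n_1)^{1/p-1/2}$, which tends to infinity with $n$ for every fixed $p>2$. Hence no argument can deliver a $C_2$ independent of the dimension; the paper's own proof conceals the same dependence in $\|\na J\|_{L^q(\B_1)}$, since any mollifier on $\R^n$ satisfies $\|\na J\|_{L^1(\B_1)} \ge n$ and so has large $L^q$ norm for $q>1$. In short: within a fixed dimension your argument is sound and is the textbook alternative to the convolution/Young route, but for the role this lemma plays in the paper the dimension dependence of the constant is the crux, and your proposal asserts rather than establishes its absence --- and the assertion is false at precisely the step where the dimension enters.
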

\begin{proof}
It is well-known that 
harmonic functions satisfy the mean-value property. So, for a symmetric mollifier $J$ on $\R^n$, pointwise we have $u=J_\delta \star u$ for each $\delta \in ]0,1/2]$, where $J_\delta(x):=\delta^{-n}J(x/\delta)$ and $\star$ is the convolution. Thus, by Young's convolution inequality  we can bound
\begin{align*}
\|\na u\|_{L^p(\B_{1/2})} &\leq \|\na J_\delta\|_{L^{q}(\B_{1/2})} \|u\|_{L^2(\B_1)},
\end{align*}
where $q$ is determined by $1/q = 1/p+1/2$. A simple scaling argument gives us $\|\na J_\delta\|_{L^{q}(\B_{1/2})} \leq \delta^{-1} \|\na J\|_{L^{q}(\B_1)}$. Now one may complete the proof by fixing $\delta$ and $J$.  \end{proof}

\subsection{Energy decay}
Next let us deduce that
\begin{lemma}\label{lem: energy decay}
For any $0 \leq r <R \leq 1$ there holds
\begin{equation} \label{energy decay estimate}
\E(r) \leq C_3 \Big(\frac{r}{R}\Big)^\beta \E(R).
\end{equation}
Here $C_3$ is a universal constant, and $\beta \in ]n/2,n[$ is a dimensional constant. In fact, $\beta$ can be chosen as close to $n$ as we want.
\end{lemma}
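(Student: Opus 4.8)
The plan is to prove the sharp bound $\E(r)\le(r/R)^{n}\,\E(R)$ for $0<r<R$, which gives \eqref{energy decay estimate} with $C_3=1$ and any exponent $\beta\le n$ in one stroke, and then to indicate how the ``dyadic iteration of Lemma~\ref{lem: weyl estimate}'' scheme envisaged in the strategy section fits in. Since $u$ is weakly harmonic, the proof of Lemma~\ref{lem: weyl estimate} already shows $u=J_\delta\star u$ pointwise, so $u$ is smooth, hence classically harmonic and real-analytic in $\B^n$. I would therefore expand $u-u(0)$ on $\B_R$ in solid spherical harmonics, $u-u(0)=\sum_{m\ge1}P_m$ with $P_m$ homogeneous harmonic of degree $m$, the series converging on each $\overline{\B_\rho}$, $\rho<R$, together with its gradient. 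The key computation is that spherical harmonics of distinct degrees are $L^2$-orthogonal on every sphere $\p\B_\rho$ --- both tangentially and normally, by the Euler relation and the eigenvalue equation for $\Delta_{\p\B_\rho}$ --- so the cross terms in $|\na(u-u(0))|^2$ integrate to zero; since $|\na P_m|^2$ is homogeneous of degree $2m-2$, integrating in the radius yields $\E(\rho)=\sum_{m\ge1}c_m\,\rho^{\,n-2+2m}$ with all $c_m\ge0$.

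From this the bound is immediate: $\E(r)/\E(R)=\sum_{m\ge1}\lambda_m\,(r/R)^{\,n-2+2m}$ where $\lambda_m:=c_mR^{\,n-2+2m}/\E(R)\ge0$ sum to $1$, so $\E(r)/\E(R)$ is a convex combination of numbers $(r/R)^{\,n-2+2m}\le(r/R)^n$ (as $r<R$ and $m\ge1$), giving $\E(r)\le(r/R)^n\,\E(R)$; the case $\E(R)=0$ is trivial, and replacing $u$ by $u-u(0)$ changes neither side. Thus \eqref{energy decay estimate} holds with the universal constant $C_3=1$ and $\beta=n$, and since $t\mapsto(r/R)^t$ is decreasing for $r<R$ it holds a fortiori for every $\beta\in\,]n/2,n[$, with $\beta$ free to be as close to $n$ as we wish. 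A purely PDE-theoretic substitute for the expansion is Almgren's monotonicity: the frequency $N(\rho):=\rho\,\E(\rho)\big/\int_{\p\B_\rho}|u-u(0)|^2\,\dd\Sigma$ is non-decreasing for harmonic $u$ and $\ge N(0^+)\ge1$, and inserting $N\ge1$ into a Rellich-type identity $\tfrac{\dd}{\dd\rho}\log\E(\rho)\ge(n-2+2N(\rho))/\rho$ and integrating gives the same inequality.

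If instead one wants the argument foreshadowed by the strategy section, I would iterate Lemma~\ref{lem: weyl estimate}: after replacing $u$ by $u-\bar u_{\B_\rho}$ (legitimate, constants being harmonic), chaining Hölder's inequality on $\B_{\rho/2}$, the rescaled Lemma~\ref{lem: weyl estimate}, and the Poincaré inequality on $\B_\rho$ produces, with all powers of $\rho$ cancelling by scaling, $\E(\rho/2)\le|\B_{1/2}|^{1-2/p}C_2^2C_P^2\,\E(\rho)$ for all $\rho\in\,]0,1]$, where $C_P$ is the Poincaré constant of the unit ball; the minuscule volume factor $|\B_{1/2}|^{1-2/p}$ overwhelms the $n$-dependence of $C_2,C_P$, so the bracket $\theta$ is $<1$, and iterating plus interpolating between dyadic radii by monotonicity of $\E$ gives \eqref{energy decay estimate} with $\beta=\log_2(1/\theta)$, which a careful accounting of constants pushes towards $n$. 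The obstacle here --- and the reason I would fall back on the spherical-harmonic (or Almgren) computation for the stated lemma --- is that this iteration only yields $C_3=1/\theta$, which degrades like $2^n$; extracting the genuinely \emph{universal} $C_3$ is precisely the step that forces one to use that $u$ solves Laplace's equation, not merely a general second-order elliptic equation.
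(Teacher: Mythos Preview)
Your spherical-harmonic argument is correct and yields the sharp inequality $\E(r)\le(r/R)^n\E(R)$, from which the lemma follows with $C_3=1$ and any $\beta\le n$. The orthogonality of the gradients $\na P_m$ over balls (via integration by parts against $\Delta P_m=0$ and the Euler relation $\p_\nu P_m=(m/\rho)P_m$) and the homogeneity computation $\int_{\B_\rho}|\na P_m|^2\,\dd x=c_m\rho^{\,n+2m-2}$ are both valid, and the passage to $\rho=R$ by monotone convergence is unproblematic since $u\in W^{1,2}(\B^n)$.

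This is, however, a genuinely different route from the paper's. The paper neither expands in spherical harmonics nor invokes Almgren frequency nor iterates dyadically; instead it rescales to $R=1$, applies H\"older on $\B_r\subset\B_{1/2}$ to bound $\E(r)$ by $\|\na u\|_{L^{2p}(\B_{1/2})}^2\,[{\rm Vol}(\B_r)]^{(p-1)/p}$, invokes Lemma~\ref{lem: weyl estimate} directly to control $\|\na u\|_{L^{2p}(\B_{1/2})}$ by $\|u\|_{L^2(\B_1)}$, and reads off $\beta=n(p-1)/p\nearrow n$ as $p\nearrow\infty$. Your approach is sharper (it reaches $\beta=n$ with the universal constant $C_3=1$, rather than $\beta<n$ with a constant assembled from $C_2$ and the uniform volume bound), self-contained (no appeal to Lemma~\ref{lem: weyl estimate}), and cleanly avoids the step --- which you rightly flag as delicate in your third paragraph --- of passing from $\|u\|_{L^2(\B_1)}^2$ to $\E(1)$ with a dimension-free constant. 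The paper's argument, by contrast, is softer: it uses harmonicity only through the mean-value property underlying Lemma~\ref{lem: weyl estimate}, and so would transplant more readily to solutions of general second-order elliptic equations that lack an explicit spectral decomposition.
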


\begin{proof}
By considering $u_R(x):=u(x/R)$ it suffices to prove for $R=1$ and $r \in ]0,1/2]$. We apply the H\"{o}lder inequality,  Lemma~\ref{lem: weyl estimate}, and the scaling ${\rm Vol}(\B_r)\slash{\rm Vol}(\B_1^n)=r^n$ to obtain
\begin{align*}
\E(r) &:= \int_{\B_{r}} |\na u|^2\,\dd x \\
&\leq \bigg\{\int_{\B_{1/2}} |\na u|^{2p}\,\dd x\bigg\}^{\frac{1}{p}} \big[{\rm Vol}(\B_r)\big]^{\frac{p-1}{p}}\\
&\leq (C_2)^2 \|u\|_{L^2(\B_1)}^2\big[{\rm Vol}(\B_r)\big]^{\frac{p-1}{p}}\\
&\equiv (C_2)^2 r^{\frac{n(p-1)}{p}}\big[{\rm Vol}(\B_1^n)\big]^{\frac{p-1}{p}}\E(1).
\end{align*}
Here $p$ is an arbitrary number in $]2,\infty[$. We select $\beta:=\frac{n(p-1)}{p}$ and note that $\beta \nearrow n$ as $p \nearrow \infty$. In addition, the volume of the unit ball is uniformly bounded in $n$; hence, there is a universal constant $C_3$ which bounds $(C_2)^2\big[{\rm Vol}(\B_1^n)\big]^{\frac{p-1}{p}}$ from the above. The proof is now complete.  \end{proof}

\subsection{Conclusion}
Now we are at the stage of presenting
\begin{proof}[Proof of Theorem~\ref{thm} and Theorem~\ref{thm'}]

 By Lemma~\ref{lem: energy decay}, for each $r \in [0,1[$ one has $\E(r) \leq C_3 r^\beta \E(1)$. Since $\beta \nearrow \infty$ and $C_3$ is universal, we have $\E(r) \searrow 0$  as $n \nearrow \infty$. Since $r \in [0,1[$ is arbitrary, we can conclude that $\E(1) \searrow 0$. Energy concentration follows directly from Eq.~\eqref{energy decay estimate}. Hence Theorem~\ref{thm'} is proved. On the other hand, clearly $u={\bf Id}_{\B^n}$ is a harmonic function. Its Dirichlet energy is given by
\begin{align*}
\E(r) = \int_{\B_r}|\na x|^2\,\dd x = n {\rm Vol}(\B_r).
\end{align*}
Sending $r \nearrow 1$, we find that ${\rm Vol}(\B^n)$ decays no slower than $ \mathcal{O}(1/n)$. This  yields Theorem~\ref{thm}.  \end{proof}

\section{The variational proof}

\subsection{Inner and outer variations}
It is well-known that a harmonic function $u:\B^n \to \R^n$ is a \emph{Dirichlet-minimiser}; that is, $u$ minimises $\E(1)$ among all the finite-energy maps attaining the same values on $\p\B^n$ (see Eq.~\eqref{min}). In particular, consider the following two types of variations: 
\begin{itemize}
\item
({\bf Inner variation}). Consider $\phi \in C^\infty_0(\B^n,\R^n)$ and $\pin^t(x) := x+t\phi(x)$. 
\item
({\bf Outer variation}). Consider $\tp \in C^\infty(\B^n \times \R^n;\R^n)$ such that $\tp(x,u)=0$ near $\p\B^n\times \R^n$, $|\na_u\tp(x,u)| \leq C_4$ and $|\tp(x,u)|+|\na_x\tp(x,u)|\leq C_5(1+|u|)$ for universal constants $C_4$ and $C_5$. Then we set $\pout^t(x,u):=u(x)+t\tp(x,u)$.
\end{itemize}

Here, $\pin^t$ and $\pout^t$ are one-parameter families of boundary-preserving diffeomorphisms obtained by deforming the domain and the range of $u$, respectively. The minimality of $u$ yields 
\begin{eqnarray}
&& \frac{d}{dt}\bigg|_{t=0} \Bigg\{\int_{\B^n} \Big| \na \big(u \circ \pin^t(x)\big) \Big|^2\,\dd x\Bigg\}=0,\label{in}\\
&& \frac{d}{dt}\bigg|_{t=0}\Bigg\{ \int_{\B^n} \Big| \na  \pout^t(x,u)\Big|^2\,\dd x\Bigg\}=0.\label{out}
\end{eqnarray}

As is standard in calculus of variation, we take $\phi(x)=\eta(|x|)x$ and $\tp(x,u)=\eta(|x|)u$. If, furthermore, the test function $\eta$ is chosen to tend to the indicator function $\1_{[0,r[}$, then a direct computation from Eqs.~\eqref{in} and \eqref{out} gives us
\begin{eqnarray}
&& (n-2)\int_{\B_r}|\na u|^2\,\dd x = r\int_{\p\B_r} |\na u|^2\,\dd \Sigma - 2r \int_{\p\B_r} |\p_\nu u|^2\,\dd\Sigma,\label{in'}\\
&& \int_{\B_r}|\na u|^2\,\dd x = \sum_{i=1}^n \int_{\p \B_r} u^i(\p_\nu u)^i \,\dd\Sigma. \label{out'}
\end{eqnarray}
In the above, $r \in [0,1]$ is arbitrary, $\nu$ is the outward unit normal vectorfield, and $\dd\Sigma$ is the (Riemannian) surface measure as before.

\subsection{Proof of Theorems~\ref{thm'} and \ref{thm}} In this subsection we show 
\begin{lemma}\label{lem: minimiser}
If $u:\B^n\to\R^n$ is a non-constant Dirichlet minimiser for $n\geq 3$, then 
\begin{equation}
\E(1) <\frac{2}{n-2} \HH(1).
\end{equation}
\end{lemma}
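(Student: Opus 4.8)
The plan is to read the bound off the Pohozaev (inner–variation) identity~\eqref{in'} evaluated at the boundary radius $r=1$, and then to sharpen it to a strict inequality by invoking the outer–variation identity~\eqref{out'}. First I would use the orthogonal splitting $|\na u|^2=|\na_{\rm tan}u|^2+|\p_\nu u|^2$, valid pointwise on each sphere $\p\B_r$, to recast~\eqref{in'} in the form
\begin{equation*}
(n-2)\,\E(r)=r\int_{\p\B_r}|\na_{\rm tan}u|^2\,\dd\Sigma-r\int_{\p\B_r}|\p_\nu u|^2\,\dd\Sigma .
\end{equation*}
Taking $r=1$ and recalling that $\HH(1)$ is the tangential Dirichlet energy $\int_{\p\B_1}|\na_{\rm tan}u|^2\,\dd\Sigma$ on the unit sphere (cf.\ Eq.~\eqref{1/n-2}), this becomes the \emph{exact} identity $(n-2)\,\E(1)=\HH(1)-\int_{\p\B_1}|\p_\nu u|^2\,\dd\Sigma$. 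Since $n\geq 3$, dividing by $n-2$ and discarding the non-negative normal term already yields $\E(1)\leq\frac{1}{n-2}\HH(1)$, comfortably inside the claimed $\frac{2}{n-2}\HH(1)$; the factor $2$ is deliberate slack, convenient for the rescaling–iteration step that Eq.~\eqref{iterate} requires.

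Second, I would promote this to the strict inequality of the lemma. If equality held in $\E(1)\leq\frac{1}{n-2}\HH(1)$, then $\int_{\p\B_1}|\p_\nu u|^2\,\dd\Sigma=0$, hence $\p_\nu u\equiv 0$ on $\p\B_1$; substituting this into the outer–variation identity~\eqref{out'} at $r=1$ gives $\E(1)=\sum_{i=1}^n\int_{\p\B_1}u^i(\p_\nu u)^i\,\dd\Sigma=0$, so $\na u\equiv 0$ and $u$ is constant, contradicting the hypothesis. Therefore $\E(1)<\frac{1}{n-2}\HH(1)\leq\frac{2}{n-2}\HH(1)$. (Equivalently, a non-constant Dirichlet minimiser has a non-constant boundary trace by uniqueness for the Dirichlet problem, so $\HH(1)>0$ and the factor $2$ closes the gap on its own; I would state whichever version reads most cleanly.)

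The identities~\eqref{in'} and~\eqref{out'} are already established in the previous subsection for every $r\in[0,1]$, so the only step I expect to need genuine care — rather than bookkeeping — is their use at the endpoint $r=1$: strictly one should apply them for $r<1$ and let $r\nearrow 1$, using the $W^{1,2}$-trace of $u$ on $\p\B_1$ (equivalently the continuity of $r\mapsto\E(r)$ and $r\mapsto\HH(r)$ up to $r=1$) to pass to the limit in each boundary integral. Because the minimiser is harmonic, hence smooth, in the interior and carries fixed finite-energy boundary data, this limiting passage is routine; everything else is algebra, and the hypothesis $n\geq 3$ enters precisely to keep the coefficient $n-2$ positive so that the stated inequality is not vacuous.
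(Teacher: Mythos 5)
Your proposal is correct and rests on the same two ingredients as the paper's own proof: the inner-variation (Pohozaev-type) identity~\eqref{in'} at $r=1$, combined with the outer-variation identity~\eqref{out'} to rule out the degenerate case $\p_\nu u\equiv 0$ on $\p\B^n$. The difference is only presentational: you rearrange \eqref{in'} directly via the splitting $|\na u|^2=|\na_{\rm tan}u|^2+|\p_\nu u|^2$ and thereby obtain the sharper bound $\E(1)<\tfrac{1}{n-2}\HH(1)$, whereas the paper runs the same algebra as a proof by contradiction starting from the stated constant $\tfrac{2}{n-2}$; both are valid.
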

\begin{definition}
$\HH$ denotes the surface-Dirichlet energy:
\begin{equation*}
\HH(r):= \int_{\p\B_r} |\na_{\rm tan} u|^2\,\dd\Sigma.
\end{equation*}
$\na_{\rm tan}$ is the tangential gradient on $\p\B_r$, {\it i.e.}, the gradient associated to the Levi-Civita connection on the round sphere $\p\B_r$.
\end{definition}

Assuming Lemma~\ref{lem: minimiser}, we may immediately deduce Theorems~\ref{thm'} and \ref{thm}.
\begin{proof}[Proof of Theorem~\ref{thm} and Theorem~\ref{thm'}]
By assumption, $\HH(1)$ is bounded independent of $n$. Hence $\E(1) \searrow 0$ as $n \nearrow \infty$. The identity map $u(x)=x$ is a Dirichlet minimiser with $\E(r)=n {\rm Vol}(\B_r)$, so we have ${\rm Vol}(\B^n)\searrow 0$. In fact, it follows that ${\rm Vol}(\B^n)$ decays no slower than $\mathcal{O}(1/n^2)$.   \end{proof}

What's left now is to present a proof of Lemma~\ref{lem: minimiser}. It follows fairly straightforwardly from the formulae of inner and outer variations, Eqs.~\eqref{in'} and \eqref{out'}.

\begin{proof}[Proof of Lemma~\ref{lem: minimiser}]
Assume for contradiction that $\E(1) \geq \frac{2}{n-2}\HH(1)$. We may compute $\HH(1)$ by subtracting the angular derivatives from the total derivatives:
\begin{align*}
\HH(1) = \int_{\p\B^n}\Big( |\na u|^2 - |\p_\nu u|^2\Big)\,\dd \Sigma.
\end{align*}
By Eq.~\eqref{in'} for inner variations we get
\begin{align*}
\HH(1) &= \int_{\p\B^n}|\na u|^2 \,\dd \Sigma + \frac{n-2}{2}\int_{\B^n} |\na u|^2\,\dd x - \frac{1}{2}\int_{\p\B^n}|\na u|^2\,\dd \Sigma\\
&= \frac{1}{2}\int_{\p\B^n}|\na u|^2 \,\dd \Sigma + \frac{n-2}{2}\int_{\B^n} |\na u|^2\,\dd x,
\end{align*}
which is no greater than $(n-2)\E(1)/2$ by assumption. It implies that 
\begin{align*}
\int_{\p\B^n}|\na u|^2 \,\dd \Sigma=0.
\end{align*}
But this forces $\p_\nu u$ to vanish in the $L^2$-norm on $\p\B^n$, which in turn implies that $\E(1)=0$ by the outer variation Eq.~\eqref{out'}. Thus $u$ is a constant on $\B^n$.  \end{proof}



\begin{thebibliography}{99}

\bibitem{x}
MathOverflow question, {``What's a nice argument that shows the volume of the unit ball in $\R^n$ approaches $0$?''} \texttt{<https://mathoverflow.net/questions/8258/whats-a-nice-argument-that-shows-the-volume-\\of-the-unit-ball-in-mathbb-rn-a>}

\bibitem{hl}
Qing Han and Fang-Hua Lin,  \textit{Elliptic partial differential equations}, Second edition. Courant Lecture Notes in Mathematics, 1. Courant Institute of Mathematical Sciences, New York; American Mathematical Society, Providence, RI, 2011

\bibitem{ll}
Elliott H. Lieb and Michael Loss, \textit{Analysis: second edition},
Graduate Studies in Mathematics, vol. 14,  American Mathematical Society, Providence, RI, 2001

\bibitem{s}
Leon Simon, \textit{Theorems on regularity and singularity of energy minimizing maps}, Based on lecture notes by Norbert Hungerb\"{u}hler. Lectures in Mathematics ETH Z\"{u}rich. Birkh\"{a}user Verlag, Basel, 1996


\end{thebibliography}
\end{document}